
\documentclass[10pt]{article}
\usepackage{amsfonts}
\usepackage{amsmath}
\usepackage{color}

\setcounter{MaxMatrixCols}{10}

\textwidth=480pt
\textheight=635pt
\oddsidemargin=-5pt
\evensidemargin=-5pt
\topmargin=-13pt
\newtheorem{theorem}{Theorem}

\newtheorem{claim}[theorem]{Claim}

\newtheorem{proposition}[theorem]{Proposition}

\newenvironment{proof}[1][Proof]{\noindent\textbf{#1.} }{\ \rule{0.5em}{0.5em}}
\input{tcilatex}
\begin{document}

\title{On sequences of large homoclinic solutions for a difference equations
on the integers involving oscillatory nonlinearities}
\author{Robert Stegli\'{n}ski}
\maketitle

\begin{abstract}
In this paper, we determine a concrete interval of positive parameters $%
\lambda $, for which we prove the existence of infinitely many homoclinic
solutions for a discrete problem 
\begin{equation*}
-\Delta \left( a(k)\phi _{p}(\Delta u(k-1))\right) +b(k)\phi
_{p}(u(k))=\lambda f(k,u(k)),\quad k\in 
\mathbb{Z}
,
\end{equation*}%
where the nonlinear term $f:%
\mathbb{Z}
\times 
\mathbb{R}
\rightarrow \mathbb{R}$ has an appropriate oscillatory behavior at infinity,
without any symmetry assumptions. The approach is based on critical point
theory.
\end{abstract}

\textbf{Math Subject Classifications}: 39A10, 47J30, 35B38

\textbf{Key Words}: Difference equations; discrete $p-$Laplacian;
variational methods; infinitely many solutions.

\section{Introduction}

\bigskip In the present paper we deal with the following nonlinear
second-order difference equation: 
\begin{equation}
\left\{ 
\begin{array}{ll}
-\Delta \left( a(k)\phi _{p}(\Delta u(k-1))\right) +b(k)\phi
_{p}(u(k))=\lambda f(k,u(k)) & \mbox{for
all $k\in\mathbb{Z}$} \\ 
u(k)\rightarrow 0 & \mbox{as $|k|\to \infty$}.%
\end{array}%
\right.  \label{eq}
\end{equation}%
Here $p>1$ is a real number, $\lambda $ is a positive real parametr, $\phi
_{p}(t)=|t|^{p-2}t$ for all $t\in {\mathbb{R}}$, $a,b:{\mathbb{Z}}%
\rightarrow \mathbb{(}0,+\infty )$, while $f:{\mathbb{Z}}\times {\mathbb{R}}%
\rightarrow {\mathbb{R}}$ is a continuous function. Moreover, the forward
difference operator is defined as $\Delta u(k-1)=u(k)-u(k-1)$. We say that a
solution $u=\{u(k)\}$ of (\ref{eq})\ is homoclinic if $\lim_{\left\vert
k\right\vert \rightarrow \infty }u(k)=0.$

\bigskip

\bigskip The problem (\ref{eq}) is in a class of partial difference
equations which usually \ describe the evolution of certain phenomena over
the course of time. The theory of nonlinear discrete dynamical systems has
been used to examine discrete models appearing in many fields such as
computing, economics, biololgy and physics.

\bigskip

Boundary value problems for difference equations can be studied in several
ways. It is well known that variational method in such problems is a
powerful tool. Many authors have applied different results of critical point
theory to prove existence and multiplicity results for the solutions of
discrete nonlinear problems. Studying such problems on bounded discrete
intervals allows for the search for solutions in a finite-dimensional Banach
space (see \cite{APR}, \cite{BC}, \cite{CIT}, \cite{CM}, \cite{BR1}). The
issue of finding solutions on unbounded intervals is more delicate. To study
such problems directly by variational methods, \cite{MG} and \cite{IT}
introduced coercive weight functions which allow for preservation of certain
compactness properties on $l^{p}$-type spaces.

\bigskip

The goal of the present paper is to establish the existence of a sequence of
homoclinic solutions for the problem (\ref{eq}), which has been studied
recently in several papers. Infinitely many solutions were obtained in \cite%
{SM} by employing Nehari manifold methods, in \cite{K} by applying a variant
of the fountain theorem (but see Section 5), and in \cite{St} by use of the
Ricceri's theorem (see \cite{BMB}, \cite{R}). In this present paper, the
result will be achieved by providing the nonlinearity with a suitable
oscillatory behavior. For this kind of nonlinearity see \cite{KMR}, \cite%
{KMT}, \cite{KRV}.

\bigskip

A special case of our contributions reads as follows. For $b:{\mathbb{Z}}%
\rightarrow {\mathbb{R}}$ and the continuous mapping $f:{\mathbb{Z}}\times {%
\mathbb{R}}\rightarrow {\mathbb{R}}$ define the following conditions:

\begin{itemize}
\item[$(B)$] $b(k)\geq b_{0}>0$ for all $k\in 
\mathbb{Z}
$, $b(k)\rightarrow +\infty $ as $\left\vert k\right\vert \rightarrow
+\infty ;$

\item[$(F_{1})$] $\displaystyle\lim_{t\rightarrow 0}\frac{\left\vert
f(k,t)\right\vert }{\left\vert t\right\vert ^{p-1}}=0$ uniformly for all $%
k\in {\mathbb{Z}}$;

\item[$(F_{2})$] $\displaystyle$there are sequences $\{c_{n}\},\{d_{n}\}$
such that $0<c_{n}<d_{n}<c_{n+1},$ $\lim_{n\rightarrow \infty }c_{n}=+\infty 
$ and $f(k,t)\leq 0$ for every $k\in 
\mathbb{Z}
$ and $t\in \lbrack c_{n},d_{n}],n\in 
\mathbb{N}
$

\item[$(F_{3})$] $\displaystyle$there is $r<0$ such that $\sup_{t\in \lbrack
r,d_{n}]}\left\vert F(\cdot .t)\right\vert \in l_{1}$ for all \ $n\in 
\mathbb{N}
;$

\item[$(F_{4}^{+})$] $\displaystyle\limsup\limits_{(k,t)\rightarrow (+\infty
,+\infty )}\frac{F(k,t)}{\left[ a(k+1)+a(k)+b(k)\right] t^{p}}=+\infty ;$

\item[$(F_{4}^{-})$] $\displaystyle\limsup\limits_{(k,t)\rightarrow (-\infty
,+\infty )}\frac{F(k,t)}{\left[ a(k+1)+a(k)+b(k)\right] t^{p}}=+\infty ;$

\item[$(F_{5})$] $\displaystyle\sup_{k\in 
\mathbb{Z}
}\left( \limsup\limits_{t\rightarrow +\infty }\frac{F(k,t)}{\left[
a(k+1)+a(k)+b(k)\right] t^{p}}\right) =+\infty ,$
\end{itemize}

where $F(k,t)$ is the primitive function of $f(k,t)$, that is $%
F(k,t)=\int_{0}^{t}f(k,s)~ds$ for every $t\in 
\mathbb{R}
$ and $k\in 
\mathbb{Z}
.$ The solutions are found in the normed space $(X,\left\Vert \cdot
\right\Vert )$, where $X=\left\{ u:{\mathbb{Z}}\rightarrow {\mathbb{R}}\ :\
\sum_{k\in {\mathbb{Z}}}\left[ a(k)\left\vert \Delta u(k-1)\right\vert
^{p}+b(k)|u(k)|^{p}\right] <\infty \right\} $ and $\Vert u\Vert =\left(
\sum_{k\in {\mathbb{Z}}}\left[ a(k)\left\vert \Delta u(k-1)\right\vert
^{p}+b(k)|u(k)|^{p}\right] \right) ^{\frac{1}{p}}.$

\begin{theorem}
\label{tw0}Assume that $(A)$, $(F_{1}),(F_{2})$ and $(F_{3})$ are satisfied.
Moreover, assume that at least one of the conditions $%
(F_{4}^{+}),(F_{4}^{-}),$ $(F_{5})$ is satisfied. Then, for any $\lambda >0,$
the problem (\ref{eq}) admits a sequence of non-negative solutions in $X$
whose norms tend to infinity.
\end{theorem}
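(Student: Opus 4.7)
The plan is to use a truncation-and-direct-minimization scheme. For each $n$, one replaces $f$ by a cutoff $f_n$ supported on $[0,d_n]$, globally minimizes the resulting functional, and then exploits the oscillation condition $(F_2)$ to show each minimizer $u_n$ automatically satisfies $0\le u_n\le d_n$, so that $u_n$ is in fact a non-negative solution of (\ref{eq}). Solutions of (\ref{eq}) are precisely the critical points of
$$I_\lambda(u)=\frac{1}{p}\|u\|^p-\lambda\sum_{k\in\mathbb{Z}}F(k,u(k))$$
on $X$; under $(B)$, $X$ is a uniformly convex reflexive Banach space and embeds compactly into $\ell^\infty(\mathbb{Z})$.

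For each $n\in\mathbb{N}$ I would set
$$f_n(k,t)=\begin{cases}0, & t<0,\\ f(k,t), & 0\le t\le d_n,\\ 0, & t>d_n,\end{cases}$$
let $F_n$ be its primitive, and consider $I_{\lambda,n}(u)=\frac{1}{p}\|u\|^p-\lambda\sum_k F_n(k,u(k))$. By $(F_3)$ the majorant $k\mapsto\sup_{t\in[r,d_n]}|F(k,t)|$ lies in $\ell^1(\mathbb{Z})$, so $I_{\lambda,n}$ is well defined, of class $C^1$, coercive and sequentially weakly lower semicontinuous on $X$; the direct method produces a global minimizer $u_n\in X$.

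The core step is to show $0\le u_n(k)\le d_n$ for every $k$. Non-negativity follows by considering $u_n^+$: since $F_n(k,\cdot)$ vanishes on $(-\infty,0]$ and $\|u_n^+\|\le\|u_n\|$ by a discrete truncation inequality, $I_{\lambda,n}(u_n^+)\le I_{\lambda,n}(u_n)$, so minimality forces $u_n=u_n^+$. For the upper bound, let $v_n=\min\{u_n,d_n\}$; on the set $\{u_n>d_n\}$ the integral expression $F(k,d_n)-F(k,t)=-\int_t^{d_n}f(k,s)\,ds\ge 0$ for $t\in[c_n,d_n]$ by $(F_2)$, while $F_n(k,\cdot)$ is constant on $[d_n,\infty)$, so $F_n(k,v_n(k))\ge F_n(k,u_n(k))$; combined with the discrete $p$-contraction $\|v_n\|\le\|u_n\|$ and minimality, this forces $u_n\le d_n$. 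Since $f_n$ and $f$ coincide on $[0,d_n]$, the Euler--Lagrange equation for $I_{\lambda,n}$ at $u_n$ is exactly (\ref{eq}), and membership in $X$ ensures $u_n(k)\to 0$ as $|k|\to\infty$.

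It remains to prove $\|u_n\|\to\infty$. Each of $(F_4^+)$, $(F_4^-)$, $(F_5)$ provides, for any $M>0$, an index $k_0\in\mathbb{Z}$ and an arbitrarily large $t_0>0$ with
$$\tfrac{1}{p}[a(k_0+1)+a(k_0)+b(k_0)]t_0^p-\lambda F(k_0,t_0)<-M.$$
The one-point test function $w$ defined by $w(k_0)=t_0$ and $w\equiv 0$ elsewhere has $\|w\|^p=[a(k_0+1)+a(k_0)+b(k_0)]t_0^p$, so $I_\lambda(w)<-M$; as soon as $d_n\ge t_0$ it is also an admissible competitor for $I_{\lambda,n}$, giving $I_{\lambda,n}(u_n)\le I_\lambda(w)<-M$. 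Hence $I_{\lambda,n}(u_n)\to-\infty$. If $\{\|u_n\|\}$ were bounded along a subsequence, then by $(B)$ so would be $\{\|u_n\|_\infty\}$, and once $d_N$ dominates this uniform sup-bound, $(F_3)$ produces a uniform $\ell^1$ majorant for $|F(k,u_n(k))|$ and hence a uniform lower bound on $I_{\lambda,n}(u_n)$, a contradiction.

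The main obstacle is the truncation step $u_n\le d_n$: one must combine the sign of $f$ on $[c_n,d_n]$ from $(F_2)$ with the $\ell^1$ majorant from $(F_3)$ to compare $I_{\lambda,n}(v_n)$ and $I_{\lambda,n}(u_n)$, while simultaneously establishing the discrete $p$-contraction $\|\min\{u,c\}\|\le\|u\|$ for $u\ge 0$, $c>0$. This is a standard consequence of monotonicity of $\phi_p$ but must be argued carefully in the weighted discrete setting with the forward-difference operator acting across the cutoff level.
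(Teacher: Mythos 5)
Your overall strategy is the same as the paper's: minimize on larger and larger truncated problems, use $(F_2)$/$(F_3)$ to control the minimizers, and drive the infima to $-\infty$ with one-point spike competitors (your last two steps coincide almost verbatim with the paper's Claim 4 and its concluding boundedness-contradiction). The difference is only in the device: the paper minimizes the \emph{original} functional $J_\lambda$ over the constraint set $W_n=\{u\in X:\ r\le u(k)\le d_n\}$ and then shows the constrained minimizer satisfies $0\le u_n\le c_n$, hence lies sup-norm--strictly inside $W_n$ and is an interior local minimizer, i.e.\ a genuine critical point of the $C^1$ functional $J_\lambda$; you instead truncate the nonlinearity at the level $d_n$ and minimize globally.

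This is where your version has a genuine gap. The cutoff $f_n$ you define is in general \emph{discontinuous} at $t=d_n$: condition $(F_2)$ only gives $f(k,d_n)\le 0$, not $f(k,d_n)=0$, so $F_n(k,\cdot)$ has a corner at $d_n$ and your claim that $I_{\lambda,n}\in C^1(X)$ is unjustified. Moreover, your truncation comparison with $v_n=\min\{u_n,d_n\}$ (where, incidentally, $(F_2)$ plays no role --- what you actually use is that $F_n(k,\cdot)$ is constant on $[d_n,\infty)$, giving equality of the nonlinear terms) only yields $0\le u_n\le d_n$, which does not exclude $u_n(k)=d_n$ at some sites; at such a point the Euler--Lagrange identity ``$I_{\lambda,n}'(u_n)=0$ is exactly (\ref{eq})'' is not established, since the functional need not be differentiable there. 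The repair is exactly the paper's Claim 2: compare $u_n$ with $\min\{(u_n)_+,c_n\}$ and use the sign of $f$ on $[c_n,d_n]$ from $(F_2)$ (via the mean value theorem) together with the strict decrease of the $b$-weighted term to conclude $0\le u_n\le c_n$. Once $u_n$ stays at sup-norm distance $\ge\min\{-r,\ d_n-c_n\}$ from the cutoff region, it is an interior local minimizer of the original $C^1$ functional $J_\lambda$ (the paper's Claim 3), and the solution property follows from Proposition \ref{propIT}. (Alternatively one can patch your argument with a one-sided directional-derivative analysis at sites where $u_n(k)=d_n$, using $f(k,d_n)\le0$, but some such extra argument is indispensable; as written the step from ``global minimizer of $I_{\lambda,n}$'' to ``solution of (\ref{eq})'' does not go through.) The remaining steps --- coercivity and weak lower semicontinuity via the $\ell^1$ majorant from $(F_3)$, nonnegativity via $u_n^+$, the spike estimates from $(F_4^\pm)$/$(F_5)$, and the final contradiction giving $\|u_n\|\to\infty$ --- are sound and mirror the paper.
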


The plan of the paper is as follows: Section 2 is devoted to our abstract
framework, while Section 3 is dedicated to the main result. In Section 4 we
give two examples of the independence of conditions $(F_{4}^{+})$ and $%
(F_{5})$. Finally, we compare our result with other known results.

\section{\protect\bigskip Abstract framework}

We begin by defining some Banach spaces. For all $1\leq p<+\infty $, we
denote $\ell ^{p}$ the set of all functions $u:{\mathbb{Z}}\rightarrow {%
\mathbb{R}}$ such that 
\begin{equation*}
\Vert u\Vert _{p}^{p}=\sum_{k\in {\mathbb{Z}}}|u(k)|^{p}<+\infty .
\end{equation*}%
Moreover, we denote $\ell ^{\infty }$ the set of all functions $u:{\mathbb{Z}%
}\rightarrow {\mathbb{R}}$ such that 
\begin{equation*}
\Vert u\Vert _{\infty }=\sup_{k\in {\mathbb{Z}}}|u(k)|<+\infty
\end{equation*}

\bigskip We set 
\begin{equation*}
X=\left\{ u:{\mathbb{Z}}\rightarrow {\mathbb{R}}\ :\ \ \sum_{k\in {\mathbb{Z}%
}}\left[ a(k)\left\vert \Delta u(k-1)\right\vert ^{p}+b(k)|u(k)|^{p}\right]
<\infty \right\}
\end{equation*}%
and 
\begin{equation*}
\Vert u\Vert =\left( \sum_{k\in {\mathbb{Z}}}\left[ a(k)\left\vert \Delta
u(k-1)\right\vert ^{p}+b(k)|u(k)|^{p}\right] \right) ^{\frac{1}{p}}.
\end{equation*}%
Clearly we have 
\begin{equation}
\Vert u\Vert _{\infty }\leq \Vert u\Vert _{p}\leq b_{0}^{-\frac{1}{p}}\Vert
u\Vert \ \mbox{for all $u\in X$.}  \label{a}
\end{equation}

\bigskip As is shown in \cite{IT}, Propositions 3, $(X,\Vert \cdot \Vert )$
is a reflexive Banach space and the embedding $X\hookrightarrow l^{p}$ is
compact.

\bigskip Let 
\begin{equation*}
\Phi (u):=\frac{1}{p}\sum_{k\in 
\mathbb{Z}
}\left[ a(k)\left\vert \Delta u(k-1)\right\vert ^{p}+b(k)\left\vert
u(k)\right\vert ^{p}\right] \ \ \ \text{for all \ \ \ }u\in X
\end{equation*}%
and 
\begin{equation*}
\Psi (u):=\sum_{k\in 
\mathbb{Z}
}F(k,u(k))\text{ \ \ for all \ \ }u\in l^{p}
\end{equation*}%
where $F(k,s)=\int_{0}^{s}f(k,t)dt$ for $s\in 
\mathbb{R}
$ and $k\in 
\mathbb{Z}
$.\ Let $J:X\rightarrow \mathbb{R}$ be the functional associated to problem (%
\ref{eq}) defined by 
\begin{equation*}
J_{\lambda }(u)=\Phi (u)-\lambda \Psi (u).
\end{equation*}

\begin{proposition}
\label{propIT}\bigskip Assume that $(A)$ and $(F_{1})$ are satisfied. Then

\begin{itemize}
\item[$(a)$] $\Phi \in C^{1}(X);$

\item[$(b)$] $\Psi \in C^{1}(l^{p})$ $\ $and \ $\Psi \in C^{1}(X)$;

\item[$(c)$] $J_{\lambda }\in C^{1}(X)$ and every critical point $u\in X$ of 
$J_{\lambda }$ is a homoclinic solution of problem (\ref{eq});

\item[$(d)$] $J_{\lambda }$ is sequentially weakly lower semicontinuous
functional\ on $X$.
\end{itemize}
\end{proposition}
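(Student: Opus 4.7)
The plan is to establish (a)--(d) in order, with (c) and (d) coming essentially for free once (a) and (b) are in place. For part (a), I would compute the Gâteaux derivative term by term. Since $\frac{1}{p}|t|^p$ is convex and $C^1$ with derivative $\phi_p(t)$, the natural candidate is
\[
\Phi'(u)(v) = \sum_{k\in\mathbb{Z}}\bigl[a(k)\phi_p(\Delta u(k-1))\Delta v(k-1) + b(k)\phi_p(u(k))v(k)\bigr].
\]
To check that this defines a continuous linear functional on $X$, I apply Hölder's inequality with exponents $p$ and $p/(p-1)$, observing that $\sum_k a(k)|\phi_p(\Delta u(k-1))|^{p/(p-1)} = \sum_k a(k)|\Delta u(k-1)|^p \leq p\Phi(u)$, and similarly for the $b$-weighted term. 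Continuity of $u\mapsto \Phi'(u)$ then follows from the uniform continuity of $\phi_p$ on bounded intervals and a standard dominated-convergence argument on the sums.

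For part (b), the key ingredient is that $u\in l^p$ forces $u\in l^\infty$ with $\|u\|_\infty\leq\|u\|_p$ and, crucially, $u(k)\to 0$ as $|k|\to\infty$. Given $\varepsilon>0$, condition $(F_1)$ supplies $\delta>0$ such that $|f(k,t)|\leq \varepsilon|t|^{p-1}$ whenever $|t|\leq\delta$, uniformly in $k$. Since only finitely many indices satisfy $|u(k)|>\delta$, the series $\sum_k F(k,u(k))$ converges absolutely: the tail is bounded by $(\varepsilon/p)\sum_k |u(k)|^p$, while the finite head is controlled by continuity of $f$. A parallel estimate, now using the exponent $p/(p-1)$, gives $(f(k,u(k)))_k\in l^{p/(p-1)}$, so that the Gâteaux derivative $v\mapsto \sum_k f(k,u(k))v(k)$ is a continuous linear functional on $l^p$; continuity in $u$ follows from the same kind of splitting together with continuity of $f$ on compacts. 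The $C^1$-ness on $X$ is then automatic from the continuous inclusion $X\hookrightarrow l^p$ provided by~(\ref{a}).

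For part (c), combining (a) and (b) yields $J_\lambda\in C^1(X)$. If $J_\lambda'(u)=0$, I would test against the Kronecker sequence $e_j$ (with $e_j(j)=1$ and $e_j(k)=0$ otherwise) and perform a discrete summation by parts; this reproduces exactly $-\Delta(a(j)\phi_p(\Delta u(j-1)))+b(j)\phi_p(u(j))=\lambda f(j,u(j))$ at each $j\in\mathbb{Z}$, while $u(k)\to 0$ is automatic from $u\in X\subset l^p$, so $u$ is homoclinic. For part (d), $\Phi$ is convex and norm-continuous, hence sequentially weakly lower semicontinuous on $X$; and the compact embedding $X\hookrightarrow l^p$ converts a weakly convergent sequence $u_n\rightharpoonup u$ in $X$ into a strongly convergent one in $l^p$, so the continuity of $\Psi$ on $l^p$ from (b) gives $\Psi(u_n)\to \Psi(u)$. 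Hence $J_\lambda=\Phi-\lambda\Psi$ is weakly lower semicontinuous. The principal obstacle is really the tail summability in (b): the whole argument rests on $(F_1)$ supplying just enough decay of $f$ near zero to compensate for the lack of compact support, and one must split off the finitely many ``large-amplitude'' indices from the small-amplitude tail to match exponents correctly.
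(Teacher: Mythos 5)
Your argument is correct and is essentially the same approach the paper takes: the paper simply defers parts $(a)$--$(c)$ to Propositions 5--7 of \cite{IT} (where $a(k)\equiv 1$) and calls part $(d)$ standard, and your H\"older/tail-splitting estimates via $(F_{1})$, the test with Kronecker sequences plus discrete summation by parts, and the convexity of $\Phi$ combined with the compact embedding $X\hookrightarrow l^{p}$ are exactly the standard details behind those citations. No genuine gap; you have just written out what the paper leaves to the references.
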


This version of the lemma, parts $(a),(b)$ and $(c),$ can be proved
essentially by the same way as Propositions 5,6 and 7 in \cite{IT}, where $%
a(k)\equiv 1$ on $%
\mathbb{Z}
$ and the norm on $X$ is slightly different. See also Lemma 2.3 in \cite{K}.
The proof of part $(d)$ is standard.

\section{\protect\bigskip Main Theorem}

\bigskip Now we will formulate and prove a stronger form of Theorem \ref{tw0}%
. Let%
\begin{equation*}
B_{\pm }:=\limsup\limits_{(k,t)\rightarrow (\pm \infty ,+\infty )}\frac{%
F(k,t)}{\left[ a(k+1)+a(k)+b(k)\right] t^{p}}
\end{equation*}%
and%
\begin{equation*}
B_{0}:=\sup_{k\in 
\mathbb{Z}
}\left( \limsup\limits_{t\rightarrow +\infty }\frac{F(k,t)}{\left[
a(k+1)+a(k)+b(k)\right] t^{p}}\right) .
\end{equation*}%
Set $B=\max \{B_{\pm },B_{0}\}$. For conveniece we put $\frac{1}{+\infty }%
=0. $

\bigskip

\begin{theorem}
\label{tw1}Assume that $(A)$, $(F_{1}),(F_{2})$ and $(F_{3})$ are satisfied
and assume that $B>0$. Then, for any $\lambda >\frac{1}{Bp},$ the problem (%
\ref{eq}) admits a sequence of non-negative solutions in $X$ whose norms
tend to infinity.
\end{theorem}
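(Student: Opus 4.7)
The strategy is to apply the Ricceri/Bonanno--Molica Bisci abstract variational principle for infinitely many critical points (cf.\ \cite{BMB},\cite{R}) to a truncated version of $J_\lambda$ tailored to the oscillation pattern $(F_2)$. The truncation forces critical points to be non-negative and bounded above by $d_n$, hence to solve \eqref{eq} in the original sense; the hypothesis $B>0$ is converted, via explicit one-peak test functions, into the energy estimate that places $(1/(Bp),+\infty)$ inside the admissible $\lambda$-interval of the abstract theorem.

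The first step is to truncate: for each $n\in\mathbb{N}$, set
\[
f_n(k,t):=\begin{cases} f(k,t), & 0\le t\le d_n,\\ 0, & t<0\ \text{or}\ t>d_n,\end{cases}
\]
with primitive $F_n$, functional $\Psi_n(u)=\sum_k F_n(k,u(k))$ and modified energy $J_{\lambda,n}=\Phi-\lambda\Psi_n$. By $(F_1)$, $(F_3)$ and Proposition~\ref{propIT}, $J_{\lambda,n}$ is $C^1$ on $X$ with $\Psi_n'$ compact, sequentially weakly l.s.c., and coercive (because $F_n$ is uniformly bounded above by the $l^1$ majorant from $(F_3)$). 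For a critical point $u_n\in X$ of $J_{\lambda,n}$, testing the Euler equation against $u_n^-\in X$ yields $u_n\ge 0$ (since $f_n(k,t)=0$ for $t\le 0$); testing it against $(u_n-d_n)^+\in X$ and using $(F_2)$ (so that $f\le 0$ on $[c_n,d_n]$ and $f_n=0$ beyond $d_n$) yields $u_n(k)\le d_n$ for every $k$. Consequently $f_n(k,u_n(k))=f(k,u_n(k))$ everywhere, so $u_n$ is actually a critical point of $J_\lambda$ and hence a non-negative homoclinic solution of \eqref{eq} by Proposition~\ref{propIT}.

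The second step is to produce infinitely many such $u_n$ with diverging norms via the abstract principle. Its hypothesis reduces to the construction of a sequence $\{v_j\}\subset X$ along which $\Phi(v_j)\to+\infty$ and $p\,\Psi(v_j)/\|v_j\|^p\to B$. I would use the concentrated one-peak functions $v_j=t_j\chi_{\{k_j\}}$, for which
\[
\Phi(v_j)=\tfrac{1}{p}\bigl[a(k_j+1)+a(k_j)+b(k_j)\bigr]t_j^p,\qquad \Psi(v_j)=F(k_j,t_j),
\]
so the ratio equals $p\,F(k_j,t_j)/\bigl[(a(k_j+1)+a(k_j)+b(k_j))t_j^p\bigr]$. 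Choosing $(k_j,t_j)$ to realise the limsup in whichever of $B_+$, $B_-$, $B_0$ equals $B$ yields the required estimate, and $\Phi(v_j)\to+\infty$ either from $b(k_j)\to+\infty$ (cases $B_\pm$, by $(B)$) or from $t_j\to+\infty$ (case $B_0$).

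The principal obstacle is to reconcile the $n$-dependent truncation level $d_n$ of Step~1 with the unbounded test functions of Step~2, so that the abstract theorem applied inside each $J_{\lambda,n}$ still yields critical points with $\|u_n\|\to+\infty$ rather than being trapped in a single truncation regime. I would resolve this by selecting the test-function altitudes $t_j\in[c_{n_j},d_{n_j}]$ along a subsequence $n_j\to\infty$, which is admissible because $c_n,d_n\to+\infty$ by $(F_2)$ and the limsup defining $B$ is attained along $t\to+\infty$; applying the abstract theorem separately to each $J_{\lambda,n_j}$ then produces the desired sequence of non-negative homoclinic solutions of \eqref{eq} with $\|u_{n_j}\|\to+\infty$.
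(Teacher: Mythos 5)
Your Step 1 is sound in substance and is essentially a repackaging of the paper's scheme: truncating $f$ at the levels $0$ and $d_n$ and pushing critical points into $[0,d_n]$ by testing with $u_n^-$ and $(u_n-d_n)^+$ plays the same role as the paper's minimization of $J_\lambda$ over $W_n=\{u\in X: r\le u(k)\le d_n\}$ followed by the comparison argument with $w_n=\gamma\circ u_n$ (Claims 1--3). Only a small repair is needed: your $f_n$ is discontinuous at $t=d_n$ whenever $f(k,d_n)<0$, so you should truncate as $f_n(k,t)=f(k,\min(t_+,d_n))$ (the sign information from $(F_2)$ is preserved, since then $f_n\le 0$ for $t\ge c_n$) or work with the constrained minimization directly; otherwise $\Psi_n\in C^1$ is not justified.

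The genuine gap is in Step 2, i.e.\ in how you produce infinitely many solutions with diverging norms. The Ricceri/Bonanno--Molica Bisci principle does not have the hypothesis you attribute to it: exhibiting one-peak functions $v_j$ with $p\,\Psi(v_j)/\|v_j\|^p\to B$ only controls the \emph{lower} endpoint $1/(pB)$ of its admissible interval; the upper endpoint is governed by a quantity of the type $A=\liminf_{t\to+\infty}\sum_{k}\max_{|\xi|\le t}F(k,\xi)/t^p$ (see the theorem from \cite{St} recalled in Section 5), which is not controlled by $(F_1)$--$(F_3)$, so the abstract interval need not contain a given $\lambda>1/(Bp)$ --- indeed the whole point of the present theorem, as the comparison with \cite{St} shows, is to dispense with the requirement $A<b_0B$, and that cannot come out of the abstract theorem. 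Moreover, applied ``separately to each $J_{\lambda,n_j}$'', that theorem cannot yield an unbounded sequence of critical points, because each truncated functional is coercive and bounded below; at best you get one global minimizer $u_{n_j}$ per truncation level, and nothing in your argument shows $\|u_{n_j}\|\to+\infty$. What is missing is exactly the quantitative content of the paper's Claim 4 and of the final step: since $\lambda>\frac{1}{Bp}$ one can choose $\sigma$ (or $B-\varepsilon$) with $\lambda\sigma>\frac1p$ and peaks with $1\le t_j\le d_{n_j}$ realizing $F(k_j,t_j)>\sigma\bigl(a(k_j+1)+a(k_j)+b(k_j)\bigr)t_j^p$, whence $\inf_X J_{\lambda,n_j}\le J_\lambda(v_j)<\bigl(\tfrac1p-\lambda\sigma\bigr)\bigl(a(k_j+1)+a(k_j)+b(k_j)\bigr)t_j^p\to-\infty$, so $J_\lambda(u_{n_j})\to-\infty$; and then one must rule out bounded subsequences, e.g.\ by noting that a subsequence bounded in $X$ is bounded in $\ell^\infty$, hence lies below a fixed level $d_{m_0}$, where the $(F_3)$ majorant bounds $J_\lambda$ from below (the paper instead traps such a subsequence in a fixed $W_{m_0}$ and gets $\eta_{n_i}=\eta_{m_0}$, contradicting $\eta_n\to-\infty$). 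If you replace your appeal to the abstract principle by this direct argument, your proof closes and coincides, up to presentation, with the paper's.
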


\begin{proof}
Put $\lambda >\frac{1}{Bp}$ and put $\Phi ,\Psi $ and $J_{\lambda }$ as in
the previous section. By Proposition \ref{propIT} we need to find a sequence 
$\{u_{n}\}$ of critical points of $J_{\lambda }$ with non-negative terms
whose norms tend to infinity.

Let $\{c_{n}\},\{d_{n}\}$ be sequences\ and $r<0$ a number satisfying
conditions $(F_{2})$ and $(F_{3})$. For every $n\in 
\mathbb{N}
$ define the set 
\begin{equation*}
W_{n}=\left\{ u\in X:r\leq u(k)\leq d_{n}\text{ for every }k\in 
\mathbb{Z}
\right\} .
\end{equation*}

\begin{claim}
\bigskip \label{c1}For every $n\in 
\mathbb{N}
,$ the functional $J_{\lambda }$ is bounded from below on $W_{n}$ and its
infimum on $W_{n}$ is attained.
\end{claim}

\bigskip Clearly, the set $W_{n}$ is weakly closed in $X$. By condition $%
(F_{3})$ we have 
\begin{eqnarray*}
J(u) &=&\frac{1}{p}\sum_{k\in 
\mathbb{Z}
}\left[ a(k)\left\vert \Delta u(k-1)\right\vert ^{p}+b(k)\left\vert
u(k)\right\vert ^{p}\right] -\lambda \sum_{k\in 
\mathbb{Z}
}F(k,u(k)) \\
&\geq &-\lambda \sum_{k\in 
\mathbb{Z}
}\max_{t\in \lbrack r,d_{n}]}F(k,t)>-\infty
\end{eqnarray*}%
for $u\in W_{n}$. Thus, $J_{\lambda }$ is bounded from below on $W_{n}$. Let 
$\eta _{n}=\inf_{W_{n}}J_{\lambda }$ and $\{\tilde{u}_{l}\}$ be sequence in $%
X$ such that $\eta _{n}\leq J_{\lambda }(\tilde{u}_{l})\leq \eta _{n}+\frac{1%
}{l}$ for all $l\in 
\mathbb{N}
$. Then%
\begin{eqnarray*}
\frac{1}{p}\left\Vert \tilde{u}_{l}\right\Vert ^{p} &=&\frac{1}{p}\sum_{k\in 
\mathbb{Z}
}\left[ a(k)\left\vert \Delta \tilde{u}_{l}(k-1)\right\vert
^{p}+b(k)\left\vert \tilde{u}_{l}(k)\right\vert ^{p}\right] =J(\tilde{u}%
_{l})+\lambda \sum_{k\in 
\mathbb{Z}
}F(k,\tilde{u}_{l}(k)) \\
&\leq &\eta _{n}+1+\lambda \sum_{k\in 
\mathbb{Z}
}\max_{t\in \lbrack r,d_{n}]}F(k,t)
\end{eqnarray*}%
for all $l\in 
\mathbb{N}
$, i.e. $\{\tilde{u}_{l}\}$ is bounded in $X$. So, up to subsequence, $\{%
\tilde{u}_{l}\}$ weakly converges in $X$ to some $u_{n}\in W_{n}$. By the
sequentially weakly lower semicontinuity of $J_{\lambda }$ we conclude that $%
J_{\lambda }(u_{n})=\eta _{n}=\inf_{W_{n}}J_{\lambda }$. This proves Claim %
\ref{c1}.

\begin{claim}
\bigskip \label{c2}For every $n\in 
\mathbb{N}
,$ let $u_{n}\in W_{n}$ be such that $J_{\lambda
}(u_{n})=\inf_{W_{n}}J_{\lambda }$. Then, \bigskip $0\leq u_{n}(k)\leq c_{n} 
$ for all $k\in 
\mathbb{Z}
$.
\end{claim}

\bigskip Let $K=\{k\in 
\mathbb{Z}
:u_{n}(k)\notin \lbrack 0,c_{n}]\}$ and suppose that $K\neq \emptyset .$ We
then introduce the sets 
\begin{equation*}
K_{-}=\{k\in K:\ u_{n}(k)<0\}\qquad \mbox{and}\qquad K_{+}=\{k\in K:\
u_{n}(k)>c_{n}\}.
\end{equation*}%
Thus, $K=K_{-}\cup K_{+}$.

Define the truncation function $\gamma :\mathbf{R}\rightarrow \mathbf{R}$ by 
$\gamma (s)=\min (s_{+},c_{n})$, where $s_{+}=\max (s,0).$ Now, set $%
w_{n}=\gamma \circ u_{n}.$ Clearly $w_{n}\in X$. Moreover, $w_{n}(k)\in
\lbrack 0,c_{n}]$ for every $k\in 
\mathbb{Z}
$; thus $w_{n}\in W_{n}.$

We also have that $w_{n}(k)=u_{n}(k)$ for all $k\in 
\mathbb{Z}
\setminus K$, $w_{n}(k)=0$ for all $k\in K_{-}$, and $w_{n}(k)=c_{n}$ for
all $k\in K_{+}$. Furthemore, we have 
\begin{eqnarray}
J_{\lambda }(w_{n})-J_{\lambda }(u_{n}) &=&\frac{1}{p}\sum_{k\in 
\mathbb{Z}
}a(k)\left( |\Delta w_{n}(k-1)|^{p}-|\Delta u_{n}(k-1)|^{p}\right) +  \notag
\\
&&+\frac{1}{p}\sum_{k\in 
\mathbb{Z}
}b(k)\left( \left\vert w_{n}(k)\right\vert ^{p}-\left\vert
u_{n}(k)\right\vert ^{p}\right) -\lambda \sum_{k\in 
\mathbb{Z}
}[F(k,w_{n}(k))-F(k,u_{n}(k))]  \notag \\
&=&:\frac{1}{p}I_{1}+\frac{1}{p}I_{2}-\lambda I_{3}.  \label{J-J}
\end{eqnarray}%
Since $\gamma $ is a Lipschitz function with Lipschitz-constant 1, and $%
w=\gamma \circ \tilde{u}$, we have%
\begin{eqnarray}
I_{1} &=&\sum_{k\in 
\mathbb{Z}
}a(k)\left( |\Delta w_{n}(k-1)|^{p}-|\Delta u_{n}(k-1)|^{p}\right)  \notag \\
&=&\sum_{k\in 
\mathbb{Z}
}a(k)\left( |w_{n}(k)-w_{n}(k-1)|^{p}-|u_{n}(k)-u_{n}(k-1)|^{p}\right) 
\notag \\
&\leq &0.  \label{I1}
\end{eqnarray}%
Moreover, we have 
\begin{eqnarray}
I_{2} &=&\sum_{k\in 
\mathbb{Z}
}b(k)\left( \left\vert w_{n}(k)\right\vert ^{p}-\left\vert
u_{n}(k)\right\vert ^{p}\right) =\sum_{k\in K}b(k)\left( \left\vert
w_{n}(k)\right\vert ^{p}-(u_{n}(k))^{p}\right)  \notag \\
&=&\sum_{k\in K_{-}}-b(k)\left\vert u_{n}(k)\right\vert ^{p}+\sum_{k\in
K_{+}}b(k)[c_{n}^{p}-\left\vert u_{n}(k)\right\vert ^{p}] \\
&\leq &0.  \notag
\end{eqnarray}%
Next, we estimate $I_{3}$. First, $F(k,s)=0$ for $s\leq 0$, $k\in 
\mathbb{Z}
$, and consequently $\sum_{k\in K_{-}}[F(k,w_{n}(k))-F(k,u_{n}(k))]=0.$ By
the mean value theorem, for every $k\in K_{+}$, there exists $\xi _{k}\in
\lbrack c_{n},u_{n}(k)]\subset \lbrack c_{n},d_{n}]$ such that $%
F(k,w_{n}(k))-F(k,u_{n}(k))=F(k,c_{n})-F(k,u_{n}(k))=f(k,\xi
_{k})(c_{n}-u_{n}(k)).$ Taking into account hypothesis $(F_{2})$, we have
that $F(k,w_{n}(k))-F(k,u_{n}(k))\geq 0$ for every $k\in K_{+}$.
Consequently, 
\begin{eqnarray}
I_{3} &=&\sum_{k\in 
\mathbb{Z}
}[F(k,w_{n}(k))-F(k,u_{n}(k))]=\sum_{k\in K}[F(k,w_{n}(k))-F(k,u_{n}(k))] 
\notag \\
&=&\sum_{k\in K_{+}}[F(k,w_{n}(k))-F(k,u_{n}(k))]\geq 0.  \label{I3}
\end{eqnarray}%
Combining relations (\ref{I1})-(\ref{I3}) with (\ref{J-J}), we have that 
\begin{equation*}
J_{\lambda }(w_{n})-J_{\lambda }(u_{n})\leq 0.
\end{equation*}%
But $J_{\lambda }(w_{n})\geq J_{\lambda }(u_{n})=\inf_{W_{n}}J_{\lambda }$
since $w_{n}\in W_{n}$. So, every term in $J_{\lambda }(w_{n})-J_{\lambda
}(u_{n})$ should be zero. In particular, from $I_{2}$, we have 
\begin{equation*}
\sum_{k\in K_{-}}\left\vert u_{n}(k)\right\vert ^{p}=\sum_{k\in
K_{+}}[c_{n}^{p}-\left\vert u_{n}(k)\right\vert ^{p}]=0,
\end{equation*}%
which imply that $u_{n}(k)=0$ for every $k\in K_{-}$ and $u_{n}(k)=c_{n}$
for every $k\in K_{+}$. By definition of the sets $K_{-}$ and $K_{+}$, we
must have $K_{-}=K_{+}=\emptyset $, which contradicts $K_{-}\cup K_{+}=K\neq
\emptyset $; therefore $K=\emptyset $. This proves Claim \ref{c2}.

\begin{claim}
\bigskip \label{c3}For every $n\in 
\mathbb{N}
,$ let $u_{n}\in W_{n}$ be such that $J_{\lambda
}(u_{n})=\inf_{W_{n}}J_{\lambda }$. Then, $u_{n}$ is a critical point of $%
J_{\lambda }$.\bigskip
\end{claim}

It is sufficient to show that $u_{n}$ is local minimum point of $J_{\lambda
} $ in $X$. Assuming the contrary, consider a sequence $\{v_{i}\}\subset X$
which converges to $u_{n}$ and $J_{\lambda }(v_{i})<J_{\lambda
}(u_{n})=\inf_{W_{n}}J_{\lambda }$ for all $i\in 
\mathbb{N}
$. From this inequality it follows that $v_{i}\notin W_{n}$ for any $i\in 
\mathbb{N}
$. Since $v_{i}\rightarrow u_{n}$ in $X$, then due to (\ref{a}), $%
v_{i}\rightarrow u_{n}$ in $l_{\infty }$ as well. Choose a positive $\delta $
such that $\delta <\frac{1}{2}\min \{-r,d_{n}-c_{n}\}$. Then, there exists $%
i_{\delta }\in 
\mathbb{N}
$ such that $\left\Vert v_{i}-u_{n}\right\Vert _{\infty }<\delta $ for every 
$i\geq i_{\delta }$. By using Claim \ref{c2} and taking into account the
choice of the number $\delta $, we conclude that $r<v_{i}(k)<d_{n}$ for all $%
k\in 
\mathbb{Z}
$ and $i\geq i_{\delta }$, which contradicts the fact $v_{i}\notin W_{n}$.
This proves Claim \ref{c3}.

\begin{claim}
\bigskip \label{inf} For every $n\in 
\mathbb{N}
,$ let $\eta _{n}=\inf_{W_{n}}J_{\lambda }$. Then $\lim_{n\rightarrow
+\infty }\eta _{n}=-\infty $.
\end{claim}

Firstly, we assume that $B=B_{\pm }$. Without loss of generality we can
assume that $B=B_{+}$. \ We begin with $B=+\infty $. Then there exists a
number $\sigma >\frac{1}{\lambda p},$ a sequence of positive integers $%
\{k_{n}\}$ and a sequence of real numbers $\{t_{n}\}$ which tends to $%
+\infty ,$ such that%
\begin{equation*}
F(k_{n},t_{n})>\sigma (a(k_{n}+1)+a(k_{n})+b(k_{n}))t_{n}^{p}
\end{equation*}%
for all $n\in 
\mathbb{N}
$. Up to extracting a subsequence, we may assume that $d_{n}\geq t_{n}\geq 1$
for all $n\in 
\mathbb{N}
$. Define in $X$ a sequence $\{w_{n}\}$ such that, for every $n\in 
\mathbb{N}
$, $w_{n}(k_{n})=t_{n}$ and $w_{n}(k)=0$ for every $k\in 
\mathbb{Z}
\backslash \{k_{n}\}$. It is clear that $w_{n}\in W_{n}.$ One then has%
\begin{eqnarray*}
J_{\lambda }(w_{n}) &=&\frac{1}{p}\sum_{k\in 
\mathbb{Z}
}\left( a(k)\left\vert \Delta w_{n}(k-1)\right\vert ^{p}+b(k)\left\vert
w_{n}(k)\right\vert ^{p}\right) -\lambda \sum_{k\in 
\mathbb{Z}
}F(k,w_{n}(k)) \\
&<&\frac{1}{p}\left( a(k_{n}+1)+a(k_{n})\right) t_{n}^{p}+\frac{1}{p}%
b(k_{n})t_{n}^{p}-\lambda \sigma (a(k_{n}+1)+a(k_{n})+b(k_{n}))t_{n}^{p} \\
&=&\left( \frac{1}{p}-\lambda \sigma \right)
(a(k_{n}+1)+a(k_{n})+b(k_{n}))t_{n}^{p}
\end{eqnarray*}%
which gives $\lim_{n\rightarrow +\infty }J(w_{n})=-\infty $. Next, assume
that $B<+\infty $. Since $\lambda >\frac{1}{Bp}$, we can fix $\varepsilon <B-%
\frac{1}{\lambda p}$. Therefore, also taking $\{k_{n}\}$ a sequence of
positive integers and $\{t_{n}\}$ a sequence of real numbers with $%
\lim_{n\rightarrow +\infty }t_{n}=+\infty $ and $d_{n}\geq t_{n}\geq 1$ for
all $n\in 
\mathbb{N}
$ such that 
\begin{equation*}
F(k_{n},t_{n})>(B-\varepsilon )(a(k_{n}+1)+a(k_{n})+b(k_{n}))t_{n}^{p}
\end{equation*}%
for all $n\in 
\mathbb{N}
$, choosing $\{w_{n}\}$ in $W_{n}$ as above, one has 
\begin{equation*}
J_{\lambda }(w_{n})<\left( \frac{1}{p}-\lambda (B-\varepsilon )\right)
(a(k_{n}+1)+a(k_{n})+b(k_{n}))t_{n}^{p}.
\end{equation*}%
So, also in this case, $\lim_{n\rightarrow +\infty }J(w_{n})=-\infty $.

Now, assume that $B=B_{0}$. We begin with $B=+\infty $. Then there exists a
number $\sigma >\frac{1}{\lambda p}$ and an index $k_{0}\in 
\mathbb{Z}
$ such that%
\begin{equation*}
\limsup\limits_{t\rightarrow +\infty }\frac{F(k_{0},t)}{%
(a(k_{0}+1)+a(k_{0})+b(k_{0}))\left\vert t\right\vert ^{p}}>\sigma .
\end{equation*}%
\ \ Then, there exists a sequance of real numbers $\{t_{n}\}$ such that $%
\lim_{n\rightarrow +\infty }t_{n}=+\infty $ and 
\begin{equation*}
F(k_{0},t_{n})>\sigma (a(k_{0}+1)+a(k_{0})+b(k_{0}))t_{n}^{p}
\end{equation*}%
for all $n\in 
\mathbb{N}
$. Up to considering a subsequence, we may assume that $d_{n}\geq t_{n}\geq
1 $ for all $n\in 
\mathbb{N}
$. Thus, take in $X$ a sequence $\{w_{n}\}$ such that, for every $n\in 
\mathbb{N}
$, $w_{n}(k_{0})=t_{n}$ and $w_{n}(k)=0$ for every $k\in 
\mathbb{Z}
\backslash \{k_{0}\}$. Then, one has $w_{n}\in W_{n}$ and%
\begin{eqnarray*}
J_{\lambda }(w_{n}) &=&\frac{1}{p}\sum_{k\in 
\mathbb{Z}
}\left( a(k)\left\vert \Delta w_{n}(k-1)\right\vert ^{p}+b(k)\left\vert
w_{n}(k)\right\vert ^{p}\right) -\lambda \sum_{k\in 
\mathbb{Z}
}F(k,w_{n}(k)) \\
&<&\frac{1}{p}\left( a(k_{0}+1)+a(k_{0})\right) t_{n}^{p}+\frac{1}{p}%
b(k_{0})t_{n}^{p}-\lambda \sigma (a(k_{0}+1)+a(k_{0})+b(k_{0}))t_{n}^{p} \\
&=&\left( \frac{1}{p}-\lambda \sigma \right)
(a(k_{0}+1)+a(k_{0})+b(k_{0}))t_{n}^{p}
\end{eqnarray*}%
which gives $\lim_{n\rightarrow +\infty }J(w_{n})=-\infty $. Next, assume
that $B<+\infty $. Since $\lambda >\frac{1}{Bp}$, we can fix $\varepsilon >0 
$ such that $\varepsilon <B-\frac{1}{\lambda p}$. Therefore, there exists an
index $k_{0}\in 
\mathbb{Z}
$ such that%
\begin{equation*}
\limsup\limits_{t\rightarrow +\infty }\frac{F(k_{0},t)}{%
(a(k_{0}+1)+a(k_{0})+b(k_{0}))t^{p}}>B-\varepsilon .
\end{equation*}%
and taking $\{t_{n}\}$ a sequance of real numbers with $\lim_{n\rightarrow
+\infty }t_{n}=+\infty $ and $d_{n}\geq t_{n}\geq 1$ for all $n\in 
\mathbb{N}
$ and 
\begin{equation*}
F(k_{0},t_{n})>\left( B-\varepsilon \right)
(a(k_{0}+1)+a(k_{0})+b(k_{0}))t_{n}^{p}
\end{equation*}%
for all $n\in 
\mathbb{N}
$, choosing $\{w_{n}\}$ in $W_{n}$\ as above, one has%
\begin{equation*}
J_{\lambda }(w_{n})<\left( \frac{1}{p}-\lambda (B-\varepsilon )\right)
(a(k_{0}+1)+a(k_{0})+b(k_{0}))t_{n}^{p}.
\end{equation*}%
So, also in this case, $\lim_{n\rightarrow +\infty }J_{\lambda
}(w_{n})=-\infty .$ This proves Claim \ref{inf}.

Now we are ready to end the proof of Theorem \ref{tw1}. With Proposition \ref%
{propIT}, Claims \ref{c2}--\ref{inf}, up to a subsequence, we have
infinitely many pairwise distinct non-negative homoclinic solutions $u_{n}$
of (\ref{eq}) with $u_{n}\in W_{n}$. To finish the proof, we will prove that 
$\left\Vert u_{n}\right\Vert \rightarrow +\infty $ as $n\rightarrow +\infty $%
. Let us assume the contrary. Therefore, there is a subsequence $%
\{u_{n_{i}}\}$ of $\{u_{n}\}$ which is bounded in $X$. Thus, it is also
bounded in $l_{\infty }$. Consequently, we can find $m_{0}\in 
\mathbb{N}
$ such that $u_{n_{i}}\in W_{m_{0}}$ for all $i\in 
\mathbb{N}
$. Then, for every $n_{i}\geq m_{0}$ one has 
\begin{equation*}
\eta _{m_{0}}=\inf_{W_{m_{0}}}J\leq J(u_{n_{i}})=\inf_{W_{n_{i}}}J=\eta
_{n_{i}}\leq \eta _{m_{0}},
\end{equation*}%
which proves that $\eta _{n_{i}}=\eta _{m_{0}}$ for all $n_{i}\geq m_{0}$,
contradicting Claim \ref{inf}. This concludes our proof.
\end{proof}

\textbf{Remark} Theorem \ref{tw0} follows now from Theorem \ref{tw1}.

\section{\protect\bigskip Examples}

\bigskip Now, we will show the example of a function for which we can apply
Theorem \ref{tw0}. First we give an example of a function $f$ for which $%
(F_{4}^{+})$ arise, but $(F_{5})$ is not satisfied.

\textbf{Example 1} \ Let $\{a(k)\},\{b(k)\}$ be two sequences of positive
numbers such that $\lim_{k\rightarrow +\infty }b(k)=+\infty $. Let $%
\{c_{n}\},\{d_{n}\}$ be sequences\ such that $0<c_{n}<d_{n}<c_{n+1}$ and $%
\lim_{n\rightarrow \infty }c_{n}=+\infty $. Let $\{h_{n}\}$ be a sequence
such that%
\begin{equation*}
h_{n}>n\ \left( a(n+1)+a(n)+b(n)\right) c_{n+1}^{p}
\end{equation*}%
for every $n\in 
\mathbb{N}
$. For every nonpositive integer $k$ let $f(k,\cdot ):%
\mathbb{R}
\rightarrow 
\mathbb{R}
$ be identically zero function. For every positive integer $k$ let $%
f(k,\cdot ):%
\mathbb{R}
\rightarrow 
\mathbb{R}
$ be any nonnegative continuous function such that \ $f(k,t)=0$ for $t\in 
\mathbb{R}
\backslash \left( d_{k},c_{k+1}\right) $ and $%
\int_{d_{k}}^{c_{k+1}}f(k,t)dt=h_{k}$. The conditions $(F_{1})$ and $(F_{2})$%
\ are now obviously satisfied.

Set $F(k,t):=\int_{0}^{t}f(k,s)ds$ for every $t\in 
\mathbb{R}
$ and $k\in 
\mathbb{Z}
$. Since for every $n\in 
\mathbb{N}
$ and all $r<0$ only finitely many $\max_{t\in \lbrack r,d_{n}]}F(k,t)$ is
nonzero, $(F_{3})$ is satisfied. By our choosing of the sequence $\{h_{n}\}$
we have\ 
\begin{eqnarray*}
\limsup\limits_{(k,t)\rightarrow (+\infty ,+\infty )}\frac{F(k,t)}{%
(a(k+1)a(k)+b(k))\left\vert t\right\vert ^{p}} &\geq &\lim_{n\rightarrow
+\infty }\frac{F(n,c_{n+1})}{(a(n+1)+a(n)+b(n))c_{n+1}^{p}} \\
&=&\lim_{n\rightarrow +\infty }\frac{h_{n}}{(a(n+1)+a(n)+b(n))c_{n+1}^{p}}%
=+\infty
\end{eqnarray*}%
and 
\begin{equation*}
\sup_{k\in 
\mathbb{Z}
}\left( \limsup\limits_{t\rightarrow +\infty }\frac{F(k,t)}{%
(a(k+1)+a(k)+b(k))\left\vert t\right\vert ^{p}}\right) =0.
\end{equation*}

Now we give an example of a function $f$ for which $(F_{5})$ arise, but $%
(F_{4}^{+})$ is not satisfied.

\bigskip \textbf{Example 2} \ Let $\{a(k)\},\{b(k)\}$ be two sequences of
positive numbers such that $\lim_{k\rightarrow +\infty }b(k)=+\infty $. Let $%
\{c_{n}\},\{d_{n}\}$ be sequences\ such that $0<c_{n}<d_{n}<c_{n+1}$ and $%
\lim_{n\rightarrow \infty }c_{n}=+\infty $. Let $\{h_{n}\}$ be a sequence of
nonnegative numbers satisfying%
\begin{equation*}
\frac{\sum_{k=1}^{n}h_{k}}{(a(1)+a(0)+b(0))c_{n+1}^{p}}>n
\end{equation*}%
for every $n\in 
\mathbb{N}
$. Let $\tilde{f}:%
\mathbb{R}
\rightarrow 
\mathbb{R}
$ be the continuous nonnegative function given by%
\begin{equation*}
\tilde{f}(s):=\sum_{n\in 
\mathbb{N}
}\ 2h_{n}\left( c_{n+1}-d_{n}-2\left\vert s-\frac{1}{2}\left(
d_{n}+c_{n+1}\right) \right\vert \right) \cdot \mathbf{1}_{[d_{n},c_{n+1}]}
\end{equation*}%
where $\mathbf{1}_{[d,c]}$ is the indicator of the interval $[d,c].$\ We
check at once that, for every $n\in 
\mathbb{N}
,$%
\begin{equation*}
\int_{d_{n}}^{c_{n+1}}\tilde{f}(s)\ ds=h_{n}.
\end{equation*}%
Set $f(0,s):=\tilde{f}(s)$ for $s\in 
\mathbb{R}
$ and $f(k,s)=0$ for $k\in 
\mathbb{Z}
\backslash \{0\}$ and $s\in 
\mathbb{R}
$. Set $F(k,t):=\int_{0}^{t}f(k,s)ds$ for every $t\in 
\mathbb{R}
$ and $k\in 
\mathbb{Z}
$. Then $F(0,c_{n+1})=\sum_{k=1}^{n}h_{k}.$ The conditions $(F_{1}),(F_{2})$
and $(F_{3})$\ are satisied and 
\begin{eqnarray*}
\sup_{k\in 
\mathbb{Z}
}\left( \limsup\limits_{t\rightarrow +\infty }\frac{F(k,t)}{%
(a(k+1)+a(k)+b(k))\left\vert t\right\vert ^{p}}\right)
&=&\limsup\limits_{t\rightarrow +\infty }\frac{F(0,t)}{(a(1)+a(0)+b(0))\left%
\vert t\right\vert ^{p}} \\
&\geq &\lim_{n\rightarrow +\infty }\frac{F(0,c_{n+1})}{%
(a(1)+a(0)+b(0))c_{n+1}^{p}} \\
&=&\lim_{n\rightarrow +\infty }\frac{\sum_{k=1}^{n}h_{k}}{%
(a(1)+a(0)+b(0))c_{n+1}^{p}}=+\infty .
\end{eqnarray*}%
Moreover%
\begin{equation*}
\limsup\limits_{(k,t)\rightarrow (+\infty ,+\infty )}\frac{F(k,t)}{%
(a(k+1)+a(k)+b(k))t^{p}}=0.
\end{equation*}

\bigskip

\section{\protect\bigskip Comparision with other known results}

In the paper \cite{K}, the following theorem is presented:\bigskip

\begin{theorem}
Assume that a function $b:%
\mathbb{Z}
\rightarrow 
\mathbb{R}
$ and a continuous function $f:%
\mathbb{Z}
\times 
\mathbb{R}
\rightarrow 
\mathbb{R}
$ satisfy conditions:

\begin{itemize}
\item[$(B)$] $b(k)\geq b_{0}>0$ for all $k\in 
\mathbb{Z}
$, $b(k)\rightarrow +\infty $ as $\left\vert k\right\vert \rightarrow
+\infty ;$

\item[$(H_{1})$] $\displaystyle\sup_{\left\vert t\right\vert \leq
T}\left\vert F(\cdot .t)\right\vert \in l_{1}$ for all $T>0$;

\item[$(H_{2})$] $\displaystyle f(k,-t)=-f(k,t)$ for all $k\in 
\mathbb{Z}
$ and $t\in 
\mathbb{R}
;$

\item[$(H_{3})$] $\displaystyle$there exist $d>0$ and $q>p$ such that $%
~\left\vert F(k,t)\right\vert \leq d\left\vert t\right\vert ^{q}$ for all $%
k\in 
\mathbb{Z}
$ and $t\in 
\mathbb{R}
;$

\item[$(H_{4})$] $\displaystyle\lim\limits_{\left\vert t\right\vert
\rightarrow +\infty }\frac{f(k,t)t}{\left\vert t\right\vert ^{p}}=+\infty $
uniformly for all $k\in 
\mathbb{Z}
;$

\item[$(H_{5})$] $\displaystyle$there exists $\sigma \geq 1$ such that $%
\sigma \mathcal{F}(k,t)\geq \mathcal{F}(k,st)$ for $k\in 
\mathbb{Z}
,t\in 
\mathbb{R}
,$ and $s\in \lbrack 0,1],$
\end{itemize}

where $F(k,t)$ is the primitive function of $f(k,t)$, that is $%
F(k,t)=\int_{0}^{t}f(k,s)ds$ for every $t\in 
\mathbb{R}
$ and $k\in 
\mathbb{Z}
,$ and $\mathcal{F}(k,t)=tf(k,t)-pF(k,t)$. Then, for any $\lambda >0$,
problem (\ref{eq}) has a sequence $\{u_{n}(k)\}$ of nontrivial solutions
such that $J_{\lambda }(u_{n})\rightarrow +\infty $ as $n\rightarrow +\infty
.$
\end{theorem}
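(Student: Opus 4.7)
The plan is to apply the symmetric Fountain Theorem of Bartsch to $J_\lambda\in C^1(X)$, which is even by (H2). I fix a Schauder basis $\{e_j\}_{j\in\mathbb{Z}}$ of the reflexive, separable space $X$ (e.g.\ the coordinate pulses $e_j(k)=\delta_{jk}$) and set $Y_k=\mathrm{span}\{e_j:|j|\le k\}$, with $Z_k$ its topological complement. On the finite-dimensional $Y_k$, (H4) yields $F(k,t)/|t|^p\to+\infty$ uniformly on the (compact) unit sphere of $Y_k$, forcing $J_\lambda(tu)\to-\infty$ as $t\to+\infty$; hence some $\rho_k$ satisfies $\max_{u\in Y_k,\,\|u\|=\rho_k}J_\lambda(u)\le 0$. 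On $Z_k$, since the compact embedding $X\hookrightarrow\ell^p$ together with $\|\cdot\|_q\le\|\cdot\|_p$ for $q>p$ gives compactness of $X\hookrightarrow\ell^q$, the quantity $\beta_k:=\sup\{\|u\|_q:u\in Z_k,\,\|u\|=1\}\to 0$; using (H3) in the form $J_\lambda(u)\ge\tfrac{1}{p}\|u\|^p-\lambda d\beta_k^q\|u\|^q$, the radius $\gamma_k\sim\beta_k^{-q/(q-p)}$ forces $\inf_{\|u\|=\gamma_k,\,u\in Z_k}J_\lambda(u)\to+\infty$, and enlarging $\rho_k$ arranges $\rho_k>\gamma_k$.

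The hard part will be the Cerami condition $(C)_c$, specifically boundedness of a sequence $\{u_n\}\subset X$ with $J_\lambda(u_n)\to c$ and $(1+\|u_n\|)\|J'_\lambda(u_n)\|_{X^*}\to 0$. I would handle this by a Jeanjean-type monotonicity argument powered by (H5). Note first that (H5) at $s=0$ forces $\mathcal{F}(k,t)\ge 0$. Assuming $\|u_n\|\to+\infty$ and setting $v_n:=u_n/\|u_n\|$, along a subsequence $v_n\rightharpoonup v$ in $X$ and $v_n\to v$ in $\ell^p$ by the compact embedding. If $v\not\equiv 0$, then (H4) applied pointwise at $k_0$ with $v(k_0)\ne 0$, combined with a Fatou argument on $\sum_k f(k,u_n(k))u_n(k)/\|u_n\|^p$, drives the sum to $+\infty$, contradicting $\langle J'_\lambda(u_n),u_n\rangle=\|u_n\|^p-\lambda\sum f(k,u_n(k))u_n(k)=o(\|u_n\|^p)$. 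If $v\equiv 0$, I pick $t_n\in[0,1]$ maximizing $t\mapsto J_\lambda(tu_n)$: for fixed $R>0$, (H3) and $Rv_n\to 0$ in $\ell^q$ give $\Psi(Rv_n)\to 0$, hence $J_\lambda(Rv_n)\to R^p/p$, and since $Rv_n=(R/\|u_n\|)u_n$ lies in $\{tu_n:t\in[0,1]\}$ for $n$ large, $J_\lambda(t_nu_n)\ge R^p/p$ eventually, forcing $J_\lambda(t_nu_n)\to+\infty$. On the other hand, the identity
\[
pJ_\lambda(w)-\langle J'_\lambda(w),w\rangle=\lambda\sum_{k\in\mathbb{Z}}\mathcal{F}(k,w(k))
\]
applied at $w=t_nu_n$, together with $\mathcal{F}(k,t_nu_n(k))\le\sigma\mathcal{F}(k,u_n(k))$ from (H5), yields
\[
pJ_\lambda(t_nu_n)\le\sigma\bigl(pJ_\lambda(u_n)-\langle J'_\lambda(u_n),u_n\rangle\bigr)+\langle J'_\lambda(t_nu_n),t_nu_n\rangle,
\]
whose right side is bounded (the bracket by Cerami and boundedness of $J_\lambda(u_n)$; the last term vanishes at interior maxima $t_n\in(0,1)$ via $\langle J'_\lambda(t_nu_n),u_n\rangle=0$, and reduces to $\langle J'_\lambda(u_n),u_n\rangle=o(1)$ at $t_n=1$). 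This contradicts $J_\lambda(t_nu_n)\to+\infty$ and establishes boundedness.

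With $\{u_n\}$ bounded, a subsequence satisfies $u_n\rightharpoonup u$ in $X$ and $u_n\to u$ in $\ell^p$ by compact embedding. A standard $(S_+)$-type argument then closes the Cerami condition: writing $\langle\Phi'(u_n)-\Phi'(u),u_n-u\rangle=\langle J'_\lambda(u_n),u_n-u\rangle-\langle J'_\lambda(u),u_n-u\rangle+\lambda\langle\Psi'(u_n)-\Psi'(u),u_n-u\rangle\to 0$ (the first two summands by the Cerami property and weak convergence, the third by $\ell^p$-continuity of $\Psi'$ from Proposition \ref{propIT}), strict convexity of $\Phi$ upgrades weak to strong convergence in $X$. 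The Fountain Theorem thus delivers an unbounded sequence of critical values $c_n=J_\lambda(u_n)\to+\infty$ with $u_n$ nontrivial, and by Proposition \ref{propIT} each $u_n$ is a homoclinic solution of (\ref{eq}), as required.
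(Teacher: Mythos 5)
This theorem is not one the paper proves: it is Kong's result, reproduced from \cite{K} in Section 5 solely in order to be criticized. The paper's entire treatment of it consists in showing that the hypothesis set is empty, because $(H_{1})$ and $(H_{4})$ contradict each other. Indeed, since $p>1$, the uniform limit in $(H_{4})$ yields $T_{1}\geq 1$ with $f(k,t)t\geq |t|^{p}\geq |t|$ for all $|t|\geq T_{1}$ and all $k$, hence $|f(k,t)|\geq 1$ there; integrating from $T_{1}$ to $T>T_{1}$ gives $|F(k,T)|\geq (T-T_{1})-|F(k,T_{1})|$, and since $(H_{1})$ forces $\{F(k,T_{1})\}\in l_{1}$, so $F(k,T_{1})\rightarrow 0$, the sequence $|F(\cdot ,T)|$ stays bounded away from zero for $|k|$ large and cannot lie in $l_{1}$ --- contradicting $(H_{1})$. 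The statement is therefore vacuously true, and any proof of it is a proof about the empty class of nonlinearities. The mathematically relevant content, which your proposal does not detect, is precisely this inconsistency; a blind attempt to ``prove'' the theorem by the fountain theorem reproduces the error of \cite{K} rather than the paper's argument.

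Even taken as a formal exercise, your outline has gaps that are symptoms of the same problem. To make $\Psi \in C^{1}(X)$ and to manipulate $\sum_{k}f(k,u(k))u(k)$ termwise you invoke Proposition \ref{propIT}, but that proposition requires $(F_{1})$, which is not among $(H_{1})$--$(H_{5})$; none of the listed hypotheses controls $f$ (as opposed to $F$) near $t=0$, so neither the differentiability of $\Psi$ nor the lower summable bound needed for your Fatou argument in the case $v\not\equiv 0$ is available. More tellingly, the uniform superlinearity $(H_{4})$ that you use to build the negative-energy spheres in $Y_{k}$ is exactly the condition that, by the computation above, destroys the summability $(H_{1})$ needed for $\Psi$ to be finite on all of $X$ in the first place. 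Before writing down a deformation-type multiplicity argument, it is worth checking that at least one function satisfies all the hypotheses; here none does.
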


\bigskip As an example of function, which satisfied conditions $%
(H_{1})-(H_{5})$ is given the function 
\begin{equation*}
f(k,t)=\frac{1}{k^{\mu }}\left\vert t\right\vert ^{p-2}t\ln \left(
1+\left\vert t\right\vert ^{\nu }\right) ,\ \ \ \ \ (k,t)\in 
\mathbb{Z}
\times 
\mathbb{R}%
\end{equation*}%
with $\mu >1$ and $\nu \geq 1$. But the theorem cannot be applied to this
function, because it does not satisfy the condition $(H_{4})$. Moreover, the
conditions $(H_{1})$ and $(H_{4})$ are contradictory. Indeed, since $p>1$
the hypothesis $(H_{4})$ does give us $T_{1}>0$ such that $\left\vert
f(k,t)\right\vert \geq 1$ for all $\left\vert t\right\vert \geq T_{1}$ and $%
k\in 
\mathbb{Z}
$. Put $\alpha _{k}=F(k,T_{1})$ for all$\ k\in 
\mathbb{Z}
$. Then $\{\alpha _{k}\}\in l_{1},$ by $(H_{1})$. As $f$ is continuous we
have for $T>T_{1}$ and $k\in 
\mathbb{Z}
$%
\begin{eqnarray*}
\left\vert F(k,T)\right\vert &=&\left\vert \int_{0}^{T}f(k,t)dt\right\vert
=\left\vert \int_{0}^{T_{1}}f(k,t)dt+\int_{T_{1}}^{T}f(k,t)dt\right\vert
=\left\vert \alpha _{k}+\int_{T_{1}}^{T}f(k,t)dt\right\vert \\
&\geq &\left\vert \int_{T_{1}}^{T}f(k,t)dt\right\vert -\left\vert \alpha
_{k}\right\vert =\int_{T_{1}}^{T}\left\vert f(k,t)\right\vert dt-\left\vert
\alpha _{k}\right\vert \geq (T-T_{1})-\left\vert \alpha _{k}\right\vert ,
\end{eqnarray*}%
and so $\left\vert F(\cdot ,T)\right\vert \notin l_{1}$, contrary to $%
(H_{1}) $.

In the paper \cite{SM}, the problem (\ref{eq}) with $a(k)\equiv 1$ and $%
\lambda =1$\ was conidered. The authors obtained infinitely many pairs of
homoclinic solutions assuming, among other things, that $f(k,t)$ is odd in $%
t $ for each $k\in 
\mathbb{Z}
$, i.e. $(H_{2})$. Our Theorem \ref{tw1} has no symmetry assumptions and,
for instance, the function in our Example 1 is not odd. On the other hand,
Example 7 in \cite{SM} shows the function $f:%
\mathbb{Z}
\times 
\mathbb{R}
\rightarrow 
\mathbb{R}
$ satisfying assumptions of the main theorem in \cite{SM} with $f(k,t)>0$
for all $t>1$ and $k\in 
\mathbb{Z}
$. Such a function does not satisfy $(F_{2})$ and Theorem \ref{tw1} does not
apply to it.

In the paper \cite{St}, the problem (\ref{eq}) with $a(k)\equiv 1$ was
conidered and the following theorem was obtained.

\begin{theorem}
Assume that a function $b:%
\mathbb{Z}
\rightarrow 
\mathbb{R}
$ and a continuous function $f:%
\mathbb{Z}
\times 
\mathbb{R}
\rightarrow 
\mathbb{R}
$ satisfy conditions:

\begin{itemize}
\item[$(B)$] $b(k)\geq b_{0}>0$ for all $k\in 
\mathbb{Z}
$, $b(k)\rightarrow +\infty $ as $\left\vert k\right\vert \rightarrow
+\infty ;$

\item[$(F_{1})$] $\displaystyle\lim_{t\rightarrow 0}\frac{\left\vert
f(k,t)\right\vert }{\left\vert t\right\vert ^{p-1}}=0$ uniformly for all $%
k\in {\mathbb{Z}}$.
\end{itemize}

Put 
\begin{equation*}
A:=\liminf_{t\rightarrow +\infty }~\frac{\sum_{k\in 
\mathbb{Z}
}\max_{\left\vert \xi \right\vert \leq t}F(k,\xi )}{t^{p}},
\end{equation*}%
\begin{equation*}
B_{\pm ,\pm }:=\limsup\limits_{(k,t)\rightarrow (\pm \infty ,\pm \infty )}%
\frac{F(k,t)}{(2+b(k))\left\vert t\right\vert ^{p}},
\end{equation*}
\begin{equation*}
B_{\pm }:=\sup_{k\in 
\mathbb{Z}
}\left( \limsup\limits_{t\rightarrow \pm \infty }\frac{F(k,t)}{%
(2+b(k))\left\vert t\right\vert ^{p}}\right)
\end{equation*}%
and $B:=\max \{B_{\pm ,\pm },B_{\pm }\},$ where $F(k,t)$ is the primitive
function of $f(k,t)$. If \ $A<b_{0}\cdot B$, then for each $\lambda \in
I:=\left( \frac{1}{Bp},\frac{b_{0}}{Ap}\right) $ problem (\ref{eq}) admits a
sequence of solutions.
\end{theorem}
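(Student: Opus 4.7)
The plan is to apply a Ricceri-type variational principle (cf.~\cite{BMB},~\cite{R}) to $J_\lambda = \Phi - \lambda \Psi$, with $\Phi$ and $\Psi$ the Euler functionals from Section 2 specialized to $a\equiv 1$. By a straightforward adaptation of Proposition \ref{propIT}, $\Phi \in C^1(X)$ is sequentially weakly lower semicontinuous and coercive (since $\Phi(u) = \|u\|^p/p$), while $\Psi \in C^1(X)$ is sequentially weakly continuous thanks to the compact embedding $X \hookrightarrow \ell^p$. In this setting Ricceri's theorem asserts that for every $\lambda \in (0, 1/\gamma)$ for which $J_\lambda$ is unbounded below, $J_\lambda$ possesses an unbounded sequence of critical points, where
\[ \varphi(r) := \inf_{\Phi(u) < r} \frac{\sup_{\Phi(v) < r} \Psi(v) - \Psi(u)}{r - \Phi(u)}, \qquad \gamma := \liminf_{r\to+\infty} \varphi(r). \]
The task therefore reduces to proving (i) $1/\gamma \geq b_0/(Ap)$ and (ii) $J_\lambda$ is unbounded below whenever $\lambda > 1/(Bp)$; the hypothesis $A < b_0 B$ then ensures that $I = (1/(Bp), b_0/(Ap))$ is nonempty and contained in $(0, 1/\gamma)$.

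For (i), the trivial competitor $u=0$ (with $\Psi(0)=0$ and $\Phi(0) = 0 < r$) yields $\varphi(r) \leq r^{-1}\sup_{\Phi(v) < r}\Psi(v)$. The sublevel set $\{\Phi(v) < r\} = \{\|v\|^p < pr\}$ sits inside $\{\|v\|_\infty \leq t(r)\}$ with $t(r) := (pr/b_0)^{1/p}$, by the embedding (\ref{a}). Hence
\[ \varphi(r) \leq \frac{p}{b_0} \cdot \frac{\sum_{k\in\mathbb{Z}} \max_{|\xi|\leq t(r)} F(k,\xi)}{t(r)^p}, \]
and passing to the liminf as $r\to+\infty$ (so that $t(r) \to +\infty$) gives $\gamma \leq Ap/b_0$, which is exactly (i).

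For (ii), I would mimic Claim \ref{inf} of Theorem \ref{tw1}: depending on which of $B_{\pm,\pm}$ or $B_\pm$ realizes $B$, select a sequence $t_n \to +\infty$ and indices $k_n$ (drifting to $\pm\infty$ in the $B_{\pm,\pm}$ cases, fixed at some $k_0$ in the $B_\pm$ cases, with the appropriate sign on $t_n$) along which $F(k_n, \pm t_n) > (B-\varepsilon)(2+b(k_n))t_n^p$, where $\varepsilon$ is chosen small enough that $1/p - \lambda(B-\varepsilon) < 0$ (the case $B=+\infty$ is handled by replacing $B-\varepsilon$ with any $\sigma > 1/(\lambda p)$). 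Placing a single spike $w_n$ of height $\pm t_n$ at $k_n$ (and zero elsewhere) gives $\Phi(w_n) = (2+b(k_n))t_n^p/p$, so
\[ J_\lambda(w_n) \leq \bigl(\tfrac{1}{p} - \lambda(B-\varepsilon)\bigr)(2+b(k_n))t_n^p \longrightarrow -\infty. \]
Ricceri's theorem then produces the unbounded sequence of critical points, which are homoclinic solutions by Proposition \ref{propIT}. The main obstacle is the unified bookkeeping across the four sign patterns in $B_{\pm,\pm}$ and the two in $B_\pm$: one must verify that the same spike construction works in each case, by choosing the correct sign of $t_n$ and location of $k_n$ so that the growth estimate on $F$ applies; everything else (coercivity, weak semicontinuity, $C^1$ regularity) is packaged in Proposition \ref{propIT}.
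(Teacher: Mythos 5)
This statement is the theorem quoted in Section 5 from \cite{St}; the present paper gives no proof of it, and the source establishes it exactly as you propose, via the Ricceri-type variational principle of \cite{R} and \cite{BMB}. Your argument is correct and follows essentially that same route: the test estimate with $t(r)=(pr/b_{0})^{1/p}$ and inequality (\ref{a}) gives $\gamma \leq Ap/b_{0}$ (finite since $A<b_{0}B$ forces $A<+\infty$), the spike construction as in Claim \ref{inf} shows $J_{\lambda }$ is unbounded below for every $\lambda >\frac{1}{Bp}$, and the alternative in the principle together with Proposition \ref{propIT} then yields the sequence of homoclinic solutions.
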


\bigskip As the example 3 in \cite{St} shows, for any two strictly positive
real numbers $\alpha ,\beta $ there is a continuous function $f:%
\mathbb{Z}
\times 
\mathbb{R}
\rightarrow 
\mathbb{R}
$ such that $A=\alpha $ and $B=\dot{\beta}$. So, if we choose $\alpha ,\beta
>0$ with $\alpha \geq b_{0}\cdot \beta $, we will not be able to apply the
above theorem. Since this example is similar to our Example 1, the function $%
f$ satisfies the condition $(F_{2})$ and $(F_{3})$, and we can apply Theorem %
\ref{tw1} to obtain a sequence of solutions. On the other hand, as $f$ in
example 3 in \cite{St} is non-negative, it is easy to see, that we can
modify it in the way, that for some (or even infinitlely many) $k$ we have $%
f(k,t)>0$ for all $t\geq 1$ and the interval $I$ differ by as little as we
wish. Therefore, such an $f$ does not satisfy $(F_{2})$\ and can not be used
in Theorem \ref{tw1}.

\bigskip

\begin{tabular}{l}
Robert Stegli\'{n}ski \\ 
Institute of Mathematics, \\ 
Lodz University of Technology, \\ 
Wolczanska 215, 90-924 Lodz, Poland, \\ 
robert.steglinski@p.lodz.pl%
\end{tabular}

\end{document}